\newtheorem{theorem}{Theorem}[section]
\newtheorem{corollary}[theorem]{Corollary}
\theoremstyle{definition}
\newtheorem{definition}[theorem]{Definition}
\theoremstyle{remark}
\numberwithin{equation}{section}
\DeclareMathOperator{\dimH}{dim_H}
\DeclareMathOperator{\dimE}{dim_E}
\DeclareMathOperator{\ddh}{d_H}
\DeclareMathOperator{\dimS}{dim_S}
\DeclareMathOperator{\Graf}{\mathcal G}
\DeclareMathOperator{\hm}{\mathcal H}
\newcommand{\pv}{ : }
\newcommand{\pr}{\mathbb{R}}
\newcommand{\prn}{\mathbb{R}^{2n}}
\newcommand{\Gh}{G_h(n,m)}
\newcommand{\Lagr}{G_h(n,n)}
\newcommand{\Mh}{\mathcal{M}_h(n,m)}
\newcommand{\MhB}{\mathcal{M}_h^1(n,m)}
\newcommand{\He}{\mathbb{H}}
\newcommand{\SG}{\mathbb{G}}
\newcommand{\Hor}{\mathbb{V}}
\newcommand{\Ver}{\mathbb{V}^\perp}
\newcommand{\om}{\omega}
\renewcommand{\phi}{\varphi}
\renewcommand{\tilde}{\widetilde}
\begin{document}

\title[Transversality of isotropic projections]
  {Transversality of isotropic projections, unrectifiability and Heisenberg groups}

\author[Risto Hovila]{Risto Hovila$^1$}
\address{Department of Mathematics and Statistics, P.O.Box 68,
         00014 University of Helsinki, Finland$^1$}
\email{risto.hovila@helsinki.fi}
\subjclass[2000]{28A80,28A78,53D05}
\keywords{Projection, Symplectic geometry, Heisenberg group,
Hausdorff dimension, unrectifiability}
\thanks{The research was supported by the Finnish Centre of Excellence in Analysis and Dynamics Research and by the Jenny and Antti Wihuri Foundation.}

\begin{abstract}
 We show that the family of $m$-dimensional isotropic projections in $\prn$
is transversal. As an application we show that the
Besicovitch-Federer projection theorem holds for isotropic projections. We also use
transversality to obtain almost sure estimates on the Hausdorff dimension of
isotropic projections of subsets $E \subset \prn$. These results may also be
applied to gain information on the horizontal projections of the Heisenberg
group $\He^n$.
\end{abstract}

\maketitle

\section{Introduction}\label{intro}

Let $\prn$ be equipped with the standard symplectic form $\om:\prn\times\prn\to\pr,
\om(x,y) = \sum_{i=1}^n x_{i+n}y_i - x_iy_{i+n}$. In this note we examine
properties of $m$-dimensional isotropic projections in $\prn$, that is, orthogonal
projections onto $m$-dimensional isotropic subspaces of $\prn$. A linear subspace
$V \subset \prn$ is isotropic if $\om(v,w)=0$ for all $v,w \in V$.
Isotropic subspaces are closely related to horizontal subgroups of the
Heisenberg group $\He^n$. Indeed, $\Hor \subset \He^n$ is a horizontal subgroup,
if and only if $\Hor = V \times \{0\}$ for some isotropic subspace $V \subset \He^n$.
Thus the study of isotropic projections
also yields results on the horizontal projections of $\He^n$.

Our main result is the following theorem.

\begin{theorem}\label{main}
Let $n,m$ be integers such that $0<m\leq n$, let $\Gh$ be the submanifold of the
Grassmannian $G(2n,m)$ consisting of all isotropic
subspaces of $\pr^{2n}$ and denote by $P_V:\pr^{2n} \to V$ the orthogonal projection
onto the $m$-plane $V\in\Gh$. Then the projection family
$\{P_V:\pr^{2n}\to V\}_{V\in \Gh}$ is transversal.
\end{theorem}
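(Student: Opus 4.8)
The plan is to exploit the orthogonal almost-complex structure $J$ defined by $\om(x,y)=\langle Jx,y\rangle$, where $\langle\cdot,\cdot\rangle$ is the Euclidean inner product; thus $J^2=-\Id$ and $J$ is an isometry. With this, a subspace $V\in\Gh$ is isotropic precisely when $JV\subset V^\perp$, so $JV$ is an $m$-dimensional subspace of $V^\perp$, and we may split $V^\perp=JV\oplus W$ with $W=(V\oplus JV)^\perp$ and $\dim W=2(n-m)$. Since the unitary group $U(n)=O(2n)\cap Sp(2n)$ (the orthogonal maps commuting with $J$) preserves both $\langle\cdot,\cdot\rangle$ and $\om$ and acts transitively on $\Gh$, it is enough to establish the transversality inequality at a single base point $V$, with a constant that is then automatically uniform.

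First I would identify the tangent space $T_V\Gh$. A subspace near $V$ is the graph of a linear map $A:V\to V^\perp$, and writing the isotropy condition $\om(v+Av,w+Aw)=0$ and retaining the first-order part gives $\om(v,Aw)=\om(w,Av)$ for all $v,w\in V$. Decomposing $A=A_1+A_2$ with $A_1:V\to JV$ and $A_2:V\to W$, the term $A_2$ drops out of this relation, since $\om(v,w')=\langle Jv,w'\rangle=0$ for $w'\in W$, so $A_2$ is unconstrained; writing $A_1=JB$ with $B:V\to V$ turns the relation into $\langle v,Bw\rangle=\langle w,Bv\rangle$, i.e. $B$ is self-adjoint. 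Hence $T_V\Gh$ is parametrised by a self-adjoint $B:V\to V$ together with an arbitrary $C:=A_2:V\to W$, which also recovers the correct dimension $\tfrac12 m(m+1)+2m(n-m)$.

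Next, transversality reduces (by linearity and homogeneity) to a uniform lower bound for the derivative of $\Gh\ni V\mapsto P_Ve$ in the directions of $T_V\Gh$, required at every unit vector $e$ for which $|P_Ve|$ is small. Differentiating along a curve with an orthonormal frame $v_1,\dots,v_m$ of $V$ moving with velocity $Av_i\in V^\perp$, the part of $\tfrac{d}{dt}P_{V_t}e$ lying in $V$ is $\sum_i\langle e,Av_i\rangle v_i$ up to an error bounded by $\|A\|\,|P_Ve|$. Writing $e=P_Ve+Jy+e_W$ with $Jy\in JV$ (so $y\in V$) and $e_W\in W$, and using $\langle Jy,JBv_i\rangle=\langle By,v_i\rangle$, this component equals $By+C^*e_W$ to leading order. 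For a unit vector $e$ with $|P_Ve|\le\tfrac1{\sqrt2}$ we have $|y|^2+|e_W|^2\ge\tfrac12$, so one of $|y|,|e_W|$ is bounded below; in either case I can steer $By+C^*e_W$ to an arbitrary target in $V$ at a definite rate, using that a self-adjoint $B$ maps a fixed $y\neq0$ onto any prescribed vector of $V$ with $\|B\|\lesssim|y|^{-1}$, and likewise for the free map $C$. This gives the required lower bound, and the accompanying uniform bounds on higher derivatives follow from the smoothness and compactness of $\Gh$ and are routine.

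The hard part will be the tangent-space computation together with the steering step under the isotropy constraint. In the full Grassmannian $G(2n,m)$ the map $A$ is arbitrary and surjectivity onto $V$ is immediate, whereas here the $JV$-directions are restricted to $A_1=JB$. The decisive point is that this $B$ turns out to be \emph{self-adjoint} rather than skew, so symmetric matrices -- which can send a given $y\neq0$ to any vector of $V$ -- still supply all the directions needed. The tightest case is the Lagrangian one $m=n$ (so $V\in\Lagr$), where $W=0$ and the entire argument rests on the self-adjointness of $B$; this is the step I expect to demand the most care.
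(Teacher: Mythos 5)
Your proposal is correct, and while it shares the paper's overall skeleton (graphs over a base plane, linearized isotropy constraint, $U(n)$-transitivity to reduce to one chart), it proves the decisive estimate by a genuinely different mechanism. The paper works in the explicit coordinates $\Mh$: it computes every partial derivative of the (non-orthonormal) projection $\pi_A$ at $A=0$, observes that the Gram matrix $D_A\pi_A(b)\left(D_A\pi_A(b)\right)^T$ has diagonal entries $\Delta_b=1-\Vert\Phi_b(0)\Vert^2$ and off-diagonal entries $b_{n+i}b_{n+j}$, derives an inductive determinant formula, and establishes $\det\geq\tfrac12\Delta_b^m$ through a combinatorial inequality. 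You avoid the determinant computation altogether: your invariant description of the tangent space, $T_V\Gh\cong\{(B,C)\pv B:V\to V\ \text{self-adjoint},\ C:V\to W\ \text{arbitrary}\}$, matches the paper's linearized constraint $a_{(n-m+i)j}=a_{(n-m+j)i}$ and the dimension $2nm-\tfrac{m(3m-1)}{2}$, and your formula $(B,C)\mapsto By+C^{*}e_W$ for the $V$-component of the derivative is correct (via $\langle Jy,JBv_i\rangle=\langle By,v_i\rangle$ and $\langle e_W,Cv_i\rangle=\langle C^{*}e_W,v_i\rangle$). The steering step is sound: if $\Vert P_Vb\Vert\leq 1/\sqrt2$ then one of $|y|,|e_W|$ is $\geq 1/2$, a self-adjoint $B$ can send a fixed nonzero $y$ to any target $w$ with $\Vert B\Vert\leq 3|w|/|y|$, and a free $C$ handles the $e_W$ case; quantitative surjectivity bounds every singular value of the derivative from below, which yields the determinant bound. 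Your structural observation that the constraint forces $B$ to be \emph{self-adjoint} rather than skew is exactly the right point: a skew $B$ would keep $By$ orthogonal to $y$ and surjectivity would fail in the Lagrangian case $W=0$. What the paper's route buys is the explicit constant $\tfrac12\left(1-\Vert\Phi_b(0)\Vert^2\right)^m$ and a self-contained matrix verification; what yours buys is brevity and a conceptual explanation of why isotropy is harmless. One caveat: condition (3) of transversality must hold on an open coordinate neighbourhood, not only at chart centres, so after the base-point estimate you still need the continuity/Lipschitz propagation that the paper carries out explicitly (its constants $L_1$, $L_2$, $\epsilon$ and the choice $C_T\leq 2^{-(m+2)/2}$); your appeal to smoothness and compactness covers this in spirit, but it is a genuine step rather than merely the boundedness of higher $\lambda$-derivatives.
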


This paper is organized as follows:
In Section \ref{preli} we introduce the basic definitions and notation.
In Section \ref{trans_sect} we will prove Theorem \ref{main}.
As an application of the main result we give necessary and sufficient conditions
under which a subset
$E \subset \pr^{2n}$ projects onto a set of measure zero under almost all
$m$-dimensional isotropic projections (Theorem \ref{besfed_isotropic}).
We also give almost sure dimension estimates on the Hausdorff dimension of isotropic
projections of subsets of $\prn$ (Theorem \ref{dim_of_excep}). These estimates
were already proven in \cite[Theorem 1.2]{BFMT} using different methods. We improve
the result by providing estimates on the dimension of exceptional parametres.
The results mentioned above
also yield corollaries concerning the dimension of horizontal projections of subsets
of the Heisenberg group. The applications will be discussed in Section
\ref{appl_sect}.

\section{Preliminaries}\label{preli}

\subsection{Symplectic geometry}

Let $M$ be a manifold of dimension $2n$. A \textit{symplectic form} on $M$ is a
closed non-degenerate $2$-form on $M$. The \textit{standard form} $\om$ on $\prn$ is
defined by
\[
 \om(x,y) = \sum_{i=1}^n x_{i+n}y_i - x_iy_{i+n} = (Jx \, | \, y),
\]
where $(\cdot \, | \, \cdot)$ is the Euclidean inner product on $\prn$ and $J$ is the
$2n \times 2n$-matrix
\[
 J = \left(
\begin{matrix}
  0             & I_{n\times n} \\
 -I_{n\times n} & 0
\end{matrix}
\right).
\]
By a well-known theorem of Darboux, every symplectic form on $M$ is locally
diffeomorphic to the standard form $\om$ on $\prn$. Furthermore, every symplectic
vector space is isomorphic to $(\prn, \om)$. Below we work only on $\prn$ equipped
with the standard form $\om$. For more information on symplectic geometry, see
\cite{ET}.

For a linear subspace $V \subset \prn$, we define its \textit{symplectic orthogonal}
$V^\om$ by
\[
 V^\om = \{ w \pv \om(w,v) = 0 \text{ for all } v \in V\}.
\]
A linear subspace $V$ is said to be \textit{isotropic}, if $V \subset V^\om$, and
\textit{Lagrangian}, if $V = V^\om$. A subspace $V$ can be Lagrangian only when
$\dim V = n$. For integers $0 < m \leq n$, we denote by $G(2n,m)$ the space of all
$m$-dimensional linear subspaces of $\prn$. It is a compact manifold of dimension
$m(2n-m)$. Furthermore, we define the {\it isotropic Grassmannian}
$\Gh$ by
\[
 \Gh = \{V \in G(2n,m) \pv V \text{ is an isotropic subspace of } \prn\}.
\]
In the case $m=n$, $\Lagr$ is called the \textit{Lagrangian Grassmannian}. $\Gh$
is a smooth manifold of dimension $2nm-\frac{m(3m-1)}{2}$,
so for $m>1$ the isotropic Grassmannian $\Gh$ is a
submanifold of $G(2n,m)$ with positive codimension. For $m=1$, the manifolds are
the same, $G_h(n,1)=G(2n,1)$. The isotropic Grassmannian can be endowed with
a natural measure $\mu_{n,m}$ in a similar way
as the usual Grassmannian is endowed with the measure $\gamma_{n,m}$, using unitary
instead of orthogonal matrices. See \cite[Section 2]{BFMT} for more details. 

Next we define local coordinates on the isotropic Grassmannian $\Gh$. We begin by
recalling the definition of local coordinates on the Grassmannian
manifold $G(2n,m)$. Fix an $m$-plane $V \in G(2n,m)$ and choose an orthonormal basis
$\{e_1, \ldots, e_{2n}\}$ of $\prn$ such that $V = <e_1, \ldots, e_m>$. Consider
all linear maps $L(V, V^\perp) = \{L:V \to V^\perp \pv L \text{ linear}\}$. The graph
$\Graf(L) = \{(x,Lx) \pv x\in V\}$ of any such map is an $m$-plane whose intersection
with the $(2n-m)$-plane $V^\perp$ is the zero subspace. Conversely, any $m$-plane
with this property is the graph of a unique linear map $L:V\to V^\perp$. Using the
basis $\{e_1, \ldots, e_m\}$ of $V$ and the basis $\{e_{m+1}, \ldots, e_{2n}\}$ for
$V^\perp$, $L(V,V^\perp)$ can be identified with $\mathcal M(2n-m,m)$, the space of
all $(2n-m) \times m$ matrices. The $m$-plane associated to a matrix $A = (a_{ij})
\in \mathcal M(2n-m,m)$ is spanned by the vectors
\begin{equation}\label{span_vect}
 e^A_i = e_i + \sum_{k=1}^{2n-m} a_{ki} \, e_{k+m}, \quad i = 1, \ldots, m.
\end{equation}

Define a subset $\Mh$ of all $(2n-m) \times m$ matrices by setting
\begin{equation}\label{isotropic_mat}
\begin{aligned}
 \Mh =\{(a_{ij}) &\in \mathcal M(2n-m,m) \pv a_{(n-m+i)j}=a_{(n-m+j)i} \\
      &+ \sum_{k=1}^{n-m} (a_{kj}a_{(n+k)i} - a_{(n+k)j}a_{ki})
        \text{ for } j < i \leq m, 1\leq j\leq m\}.
\end{aligned}
\end{equation}

The independent coordinates $a_{ij}$ in a matrix $(a_{ij}) \in \Mh$ are
the ones with $j\in\{1,\ldots,m\}$, $i\in\{1,\ldots,n-m+j,n+1,\ldots,2n-m\}$.

$\Mh$ is an embedded submanifold of $\mathcal M(2n-m,m)$ having dimension
$2nm - \frac{m(3m-1)}{2}$. Let $A \in \mathcal M(2n-m,m)$ and denote the
$m$-plane associated to $A$ by $V_A$. Then $V_A \in \Gh$ if and only if
\[
 \om(e_i + \sum_{k=1}^{2n-m} a_{ki} e_{k+m}, e_j + \sum_{k=1}^{2n-m} a_{kj}
  e_{k+m}) = 0 \text{ for all } i,j = 1, \ldots, m
\]
and this is the case precisely when $A \in \Mh$. Thus we can define
coordinates on the isotropic Grassmannian using the matrices $A \in \Mh$.
It is enough to define the local coordinates only around the $m$-plane $V =
<e_1,\ldots,e_m>$, since the group $U(n)$ of unitary
$2n \times 2n$-matrices acts transitively on $\Gh$ by \cite[Lemma 2.2]{BFMT}
and $\omega(gu,gv)=\omega(u,v)$ for all $u,v \in \prn$, $g \in U(n)$.

\subsection{Heisenberg groups}

For an introduction to Heisenberg groups, see \cite{CDPT}. Below we state the
basic facts needed in this paper.
The Heisenberg group $\He^n$ is the unique simply connected, connected nilpotent
Lie group of step two and dimension $2n+1$ with one dimensional centre. As a manifold
$\He^n$ may be identified with $\pr^{2n+1}$. We denote points $p \in \He^n$ in
coordinates as
\[
 p = (z,t) = (z_1, \ldots, z_{2n}, t) \in \prn \times \pr.
\]
The group operation is given by
\[
 p \ast p' = (z,t) \ast (z',t') = (z+z', t+t'+2\om(z,z')),
\]
where $\om$ is the standard symplectic form on $\pr^{2n}$.

The Heisenberg metric $\ddh$ of $\He^n$ can be defined by
\[
 \ddh(p,p') := \Vert p^{-1} \ast p'\Vert_{\rm H}, \; \text{ where }
  \Vert p\Vert_{\rm H} := (\Vert z\Vert^4 + t^2)^{1/4}.
\]
Here $\Vert\cdot\Vert$ denotes the Euclidean norm on $\pr^{2n}$. This metric is
bi-Lipschitz equivalent to the usual Carnot-Caratheodory metric on $\He^n$.
The metric $\ddh$ induces the Euclidean topology, but the properties of the metric
space $(\He^n,\ddh)$ differ significantly from those of the
underlying Euclidean space. For
example, the Hausdorff dimension of $(\He^n, \ddh)$ is $2n+2$. Thus, when speaking
of the metric properties of $\He^n$, we need to specify which metric we are using.
We will denote the Hausdorff measure and Hausdorff dimension with respect to the
Heisenberg metric by $\hm_{\rm H}^s$ and $\dimH$. The Hausdorff measure and dimension
with respect to the Euclidean metric are denoted by $\hm_{\rm E}^s$ and $\dimE$.

In this paper we consider projections onto homogeneous subgroups of $\He^n$. A {\it
homogeneous subgroup} $\SG$ of $\He^n$ is a subgroup which is closed under the
intrinsic dilatations $\delta_s(z,t) = (sz, s^2t), s > 0$. There are two kinds of
homogeneous subgroups of $\He^n$. The {\it horizontal subgroups} are the ones which
are contained in $\prn \times \{0\}$ and the {\it vertical subgroups} are the
ones which contain the $t$-axis $\{0\} \times \pr$. Horizontal subgroups can be
identified with linear subspaces of $\pr^{2n+1}$ which are contained in $\prn
\times \{0\}$. However not every linear subspace of this form is a horizontal
subgroup, only those corresponding to isotropic subspaces $V$ of $\prn$. The
restriction of the Heisenberg metric to a horizontal subgroup coincides with the
Euclidean metric and therefore it is not necessary to specify the metric used in
computing the Hausdorff measure or Hausdorff dimension of a subset of a horizontal
subgroup. In this case we denote the Hausdorff measure and Hausdorff dimension simply
by $\hm^s$ and $\dim$.

Let $\Hor = V \times \{0\}$ be a horizontal subgroup. Consider $\Ver = V^\perp
\times \pr$, where $V^\perp$ is the orthogonal complement of $V$ in $\prn$. Then
$\Ver$ is a vertical subgroup of $\He^n$ and it will be called the vertical subgroup
associated to $V$.
Each point $p \in \He^n$ can be written uniquely as
\[
 p = P_{\Ver}(p) \ast P_\Hor(p),
\]
with $P_{\Ver}(p) \in \Ver$ and $P_\Hor(p) \in \Hor$. This gives rise to a
well-defined {\it horizontal projection}
\[
 P_\Hor: \He^n \to \Hor, \; (z,t) \mapsto P_\Hor(z,t) = (P_V(z), 0),
\]
and a {\it vertical projection}
\[
 P_{\Ver}: \He^n \to \Ver, \; (z,t) \mapsto P_{\Ver}(z,t) = (P_{V^\perp}(z),
 t - 2\om(P_{V^\perp}(z), P_V(z))).
\]

Since there is a one-to-one correspondence between isotropic subspaces of
$\pr^{2n}$ and horizontal subgroups of $\He^n$, these projections may be
parametrized by the isotropic Grassmannian $\Gh$. For more information on the
projections of the Heisenberg group, see \cite{BCFMT} and \cite{BFMT}.

We denote the family of all horizontal projections onto $m$-dimensional subgroups by
$\mathcal F_h(n,m)$ and the corresponding projections in the Euclidean space
$\prn$ by $\mathcal F_h^{2n}(n,m)$, that is,
\[
  \mathcal F_h(n,m) = \{ P_\Hor: \He^n \to \Hor \pv V \in \Gh \}
\]
and
\[
  \mathcal F_h^{2n}(n,m) = \{ P_V: \prn \to V \pv V \in \Gh \}.
\]

Note that when $m > 1$, the family $\mathcal F_h^{2n}(n,m)$ has dimension
$2nm-\frac{m(3m-1)}{2} < m(2n-m)$, and therefore one cannot apply standard
projection theorems (e.g. Marstrand's projection theorem or Besicovitch-Federer
projection theorem) to obtain dimension results for these projections.

\section{Transversality}\label{trans_sect}

In this section we show that the family $\mathcal F_h^{2n}(n,m)$ of projections is
transversal for every $0 < m \leq n$. We begin with the definition of
transversality.

\begin{definition}\label{transversality} Let $\Lambda\subset\pr^l$ be open.
A family of maps $\{\pi_\lambda:\pr^n \to \pr^m\}_{\lambda \in \Lambda}$
is {\it transversal} if it satisfies the following
conditions for each compact set $K\subset\pr^n$:
\begin{itemize}
\item [(1)] The mapping $\pi:\Lambda\times K\to\pr^m$,
  $(\lambda,x)\mapsto \pi_\lambda(x)$, is continuously differentiable and twice
  differentiable with respect to $\lambda$.
\item [(2)] For $j= 1,2$ there exist constants $C_j$ such that the derivatives
  with respect to $\lambda$ satisfy
  \[
   \Vert D_{\lambda}^j \pi(\lambda, x)\Vert \leq C_j \text{ for all }(\lambda,x)
    \in\Lambda\times K.
  \]
\item [(3)] For all $\lambda\in\Lambda$ and $x$, $y\in K$ with $x\ne y$,
  define
  \[
   \Phi_{x, y}(\lambda) = \frac{\pi_\lambda(x) - \pi_\lambda(y)}{\Vert x-y\Vert }.
  \]
  Then there exists a constant $C_T>0$ such that the property
  \[
   \Vert\Phi_{x, y}(\lambda)\Vert\leq C_T
  \]
  implies that
  \[
   \det \left( D_\lambda \Phi_{x, y}(\lambda) \left(D_\lambda \Phi_{x, y}(\lambda)
    \right)^T \right) \geq C_T^2.
  \]
\item [(4)] There exists a constant $C_L$ such that
  \[
   \Vert D_{\lambda}^2 \Phi_{x, y}(\lambda)\Vert\le C_L
  \]
  for all $\lambda\in\Lambda$ and $x,y\in K$, $x \neq y$.
\end{itemize}
\end{definition}

Since transversality is a local property and
the isotropic Grassmannian $G_h(n,m)$ can be covered by
finitely many coordinate neighbourhoods, we need to show that the projection
family $\mathcal F_h^{2n}(n,m)$ satisfies the above conditions
locally, that is, each $V \in \Gh$ has a
coordinate neighbourhood $U \subset \Gh$ on which 
$\{P_W:\pr^{2n}\to W\}_{W\in U}$ is transversal.

Transversal projection families have many useful properties. For instance, the
Hausdorff dimensions of sets and measures are preserved under almost all projections.
The theory of transversal mappings was extensively studied by Peres and Schlag in
\cite{PS}. A recent result for transversal projection families that we will use in
this paper is the Besicovitch-Federer projection theorem. See \cite{HJJL} for the
proof.

\begin{theorem}\label{besfed}
Let $E\subset\mathbb R^n$ be $\mathcal H^m$-measurable with $\mathcal H^m(E)<\infty$.
Assume that $\Lambda\subset\pr^l$ is open and
$\{\pi_\lambda:\mathbb R^n\to\mathbb R^m\}_{\lambda \in \Lambda}$
is a transversal family of maps. Then $E$ is purely $m$-unrectifiable, if and only if
$\mathcal H^m(\pi_\lambda(E))=0$ for $\mathcal L^l$-almost all $\lambda\in\Lambda$.
\end{theorem}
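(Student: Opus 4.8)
The plan is to prove the two implications separately. The substantive one is that pure $m$-unrectifiability forces $\mathcal H^m(\pi_\lambda(E))=0$ for $\mathcal L^l$-almost every $\lambda$; the converse I would handle by contraposition, working with the rectifiable part of $E$. Throughout I use that transversality is a local condition, so I may restrict $\Lambda$ to a set of finite measure and restrict $E$ to a compact piece without loss.

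\textbf{The converse direction.} Suppose $E$ is not purely $m$-unrectifiable. Then $E$ contains an $m$-rectifiable subset $F$ with $\mathcal H^m(F)>0$, and after discarding a null set I may assume $F$ lies on a single $C^1$ $m$-submanifold, so an approximate tangent plane $T_xF$ exists at $\mathcal H^m$-a.e. $x\in F$. The key point is that condition (3) yields \emph{spatial} non-degeneracy for almost every parameter: for fixed $x$ and a unit $u\in T_xF$, the directional derivative $\psi_u(\lambda):=D\pi_\lambda(x)[u]$ is the limit of the quotients $\Phi_{x,y}(\lambda)$ as $y\to x$ along $u$, and by the $C^2$-regularity in (1)--(2) the $\lambda$-derivatives pass to the limit as well. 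At any zero of $\psi_u$ one has $\|\Phi_{x,y}(\lambda)\|\to 0\le C_T$ for nearby $y$, so (3) forces $\det(D_\lambda\Phi_{x,y}(D_\lambda\Phi_{x,y})^T)\ge C_T^2$ and hence, in the limit, that $\psi_u$ is a submersion near its zeros; its zero set therefore has codimension $m$. Sweeping $u$ over the unit sphere of $T_xF$ enlarges the degenerate parameter set to dimension at most $(m-1)+(l-m)=l-1$, so it is $\mathcal L^l$-null, and the tangential Jacobian $J_\lambda(x)$ is positive for a.e. $\lambda$. The area formula gives $\mathcal H^m(\pi_\lambda(F))>0$ whenever $\int_F J_\lambda\,d\mathcal H^m>0$, and Tonelli applied to $\int_\Lambda\int_F J_\lambda\,d\mathcal H^m\,d\mathcal L^l=\int_F\int_\Lambda J_\lambda\,d\mathcal L^l\,d\mathcal H^m$, whose inner integral is positive for every $x$, shows that this holds for a positive-measure set of $\lambda$, contradicting the hypothesis.

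\textbf{The main direction.} Assume $E$ is purely $m$-unrectifiable; it suffices to show $\int_\Lambda\mathcal H^m(\pi_\lambda(E))\,d\mathcal L^l(\lambda)=0$. Energy/Sobolev estimates of Peres--Schlag type are \emph{not} available here, because the statement lives at the critical exponent $s=m$, where such methods break down; so I would instead reduce to the classical Besicovitch--Federer theorem for orthogonal projections by a blow-up argument. At $\mathcal H^m$-almost every $a\in E$ one analyses the tangent measures of the measure $\mathcal H^m$ restricted to $E$; conditions (1) and (2) allow the family to be linearized, so that under the rescalings the maps $\pi_\lambda$ converge to affine maps whose linear parts inherit, via the submersion property extracted from (3), exactly the infinitesimal transversality enjoyed by orthogonal projections onto $m$-planes. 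One then transfers the classical conclusion --- that purely unrectifiable tangent structures project to $\mathcal H^m$-null sets for almost every linear parameter --- back to $E$ by a density argument, using pure unrectifiability to guarantee that no tangent structure is flat.

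The main obstacle is precisely this last transfer. One must interchange the blow-up limit with both the projection and the integration in $\lambda$, and this is delicate because $\mathcal H^m$ behaves badly under weak limits unless the upper and lower $m$-densities of $E$ are controlled. Establishing uniform density bounds on a large portion of $E$, and verifying that the transversality constants $C_j$, $C_T$, $C_L$ survive the rescaling so that the limiting linear family genuinely satisfies the hypotheses of the classical projection theorem, is the technical heart of the argument and the step I expect to demand the most care.
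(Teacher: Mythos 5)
The paper does not actually prove Theorem \ref{besfed}: it is quoted from \cite{HJJL}, so the comparison is against that proof. Your converse (easy) direction is right in spirit but has a regularity gap as written: you pass $D_\lambda\Phi_{x,y}(\lambda)$ to the limit as $y\to x$ along $u$, which requires differentiability of $D_\lambda\pi(\lambda,x)$ in $x$ (a mixed second derivative with continuity), whereas Definition \ref{transversality} only guarantees joint $C^1$ regularity and $C^2$ regularity in $\lambda$; conditions (2) and (4) bound $\lambda$-derivatives but say nothing about their $x$-dependence. The standard repair avoids the limit of derivatives altogether: conditions (2)--(4) together with (3) yield the sublevel-set estimate $\mathcal L^l\{\lambda : \Vert\Phi_{x,y}(\lambda)\Vert\le\delta\}\le C\delta^m$ uniformly in $x\ne y$ for $\delta<C_T$, and since $\Vert D_x\pi_\lambda(x)u\Vert<\delta$ forces $\Vert\Phi_{x,x+tu}(\lambda)\Vert<\delta$ for small $t$, Fatou gives $\mathcal L^l\{\lambda:\Vert D_x\pi_\lambda(x)u\Vert\le\delta\}\le C\delta^m$; a $\delta$-net over $u\in S^{m-1}$ then shows the set of $\lambda$ with $D_x\pi_\lambda(x)|_{T_xF}$ singular is $\mathcal L^l$-null, after which your Tonelli and area-formula conclusion stands.

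The main direction is where the proposal genuinely fails, and you have in effect conceded this by labelling the ``transfer back'' as an unresolved obstacle --- that transfer \emph{is} the theorem. Two concrete problems: first, the claim that pure unrectifiability guarantees that ``no tangent structure is flat'' is false; purely $m$-unrectifiable sets with $\mathcal H^m(E)<\infty$ can perfectly well have flat tangent measures (tangent measures are non-unique, and flatness of blow-up limits does not characterize rectifiability without strong density hypotheses --- the correct a.e.\ criterion is absence of approximate tangent planes, a strictly weaker statement). Second, and more fundamentally, $\mathcal H^m(\pi_\lambda(E))=0$ is not an infinitesimal statement: the $\mathcal H^m$ measure of an image is neither continuous nor semicontinuous under the weak convergence of rescaled measures, so no blow-up argument can interchange the limit with the projection and the integration in $\lambda$; indeed no tangent-measure proof of the Besicovitch--Federer theorem is known even for the family of orthogonal projections, where all the infinitesimal structure you hope to extract is already present exactly. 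Accordingly, the proof cited by the paper does not reduce to the classical linear theorem by linearization: \cite{HJJL} adapts the full classical geometric argument (cone and density estimates in the style of the standard proof of Besicovitch--Federer) to transversal families, with transversality entering quantitatively through sublevel-set measure bounds of the type displayed above. As it stands, your proposal establishes only the easy half of the equivalence, modulo the regularity repair, and leaves the substantive implication unproved.
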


Fix $0<m\leq n$. To show that 
the transversality condition holds locally,
we fix an $m$-plane $V \in \Gh$, take a coordinate system
around $V$
and show that the transversality conditions hold 
in this coordinate
neighbourhood.

Define a family of projections $\pi: \Mh \times \prn \to \pr^m$ by setting
\[
\begin{aligned}
 \pi(A, x) = \pi_A(x) &= \left( \left( e^A_1 \Big| \; x \right), \ldots,
 \left( e^A_m \Big| \; x \right)\right) \\
 &= \left( x_1 + \sum_{k=1}^{2n-m} a_{k1}x_{k+m}, \ldots, x_m +
 \sum_{k=1}^{2n-m} a_{km}x_{k+m} \right).
\end{aligned}
\]
The projection $\pi_A$ is not quite the same as the orthogonal projection $P_{V_A}$
onto the $m$-plane $V_A$ corresponding to the matrix $A$, since the base $\{e^A_1,
\ldots, e^A_m\}$ of $V_A$ is not orthonormal, but we will see that for $A$
close to $0$ the projections are sufficiently close.

Define a mapping $\Phi: \{(A,x,y)\in\Mh\times\prn\times\prn\pv x\neq y\} \to \pr^m$
by
\[
 \Phi(A, x, y) = \Phi_{x,y}(A) = \frac{\pi_A(x) - \pi_A(y)}{\Vert x-y \Vert}.
\]
Denoting $b = \Vert x-y \Vert^{-1} (x-y) \in S^{2n-1}$ and using the linearity of the
projection $\pi_A$ we see that
\[
 \Phi_{x,y}(A) = \frac{\pi_A(x) - \pi_A(y)}{\Vert x-y \Vert} =
 \frac{\pi_A(x - y)}{\Vert x-y \Vert}
 = \pi_A\left( \frac{x-y}{\Vert x-y \Vert} \right) = \pi_A(b) =: \Phi_b(A).
\]

We will show that for the family defined above it holds that for every $x, y \in
\prn$ with $x \neq y$,
\begin{equation}\label{trans_pi_0}
  \det \left( D_A \Phi_{x, y}(0) \left(D_A \Phi_{x, y}(0)
    \right)^T \right) \geq \frac{1}{2}(1-\Vert\Phi_{x, y}(0)\Vert^2)^m.
\end{equation}

Before we prove this inequality, we show that it
implies that the family
$\mathcal F_h^{2n}(n,m)$ is locally transversal. The family
clearly satisfies the conditions (1), (2) and (4) in Definition
\ref{transversality}, so we need to show that the condition (3) is also valid.
We examine the problem in local coordinates $(U, \phi)$ defined
above. Let $\{v^A_1, \ldots, v^A_m\}$ be the orthonormal basis obtained by applying
the Gram-Schmidt algorithm to the basis $\{e^A_1, \ldots, e^A_m\}$. Then at
$A = 0$, we have for every $b \in S^{2n-1}$ 
\begin{equation}\label{simple_proj}
\left( v^0_i | b \right) = \left( e^0_i | b \right)
\end{equation}
and
\begin{equation}\label{simple_proj_der}
 \partial_{\alpha\beta}\big|_{A=0}  \left( v^A_i | b \right) = \partial_{\alpha\beta}
 \big|_{A=0} \left( e^A_i | b \right)
\end{equation}
for all $i,\beta = 1, \ldots, m,$ and $\alpha \in \{1, \ldots, n-m+\beta, n+1,
\ldots, 2n-m\}$,
where $\partial_{\alpha\beta}$ denotes the partial derivative with respect to
the entry $a_{\alpha\beta}$ in the matrix $A = (a_{\alpha\beta})$.

Defining
$\Phi^{\mathcal F}: \{(A,x,y)\in\Mh\times\prn\times\prn\pv x\neq y\} \to \pr^m$ by
\[
\Phi^{\mathcal F}(A,x,y) = \Phi^{\mathcal F}_{x,y}(V_A)
= \frac{P_{V_A}(x)-P_{V_A}(y)}{\Vert x-y\Vert },
\]
equations \eqref{simple_proj} and \eqref{simple_proj_der} imply that
\[
 \Phi^{\mathcal F}_{x,y}(V_0) = \Phi_{x,y}(0)
\]
and
\[
  \det \left( D_A \Phi_{x, y}(0) \left(D_A \Phi_{x, y}(0) \right)^T \right)
 =\det \left( D_A \Phi^{\mathcal F}_{x, y}(V_0) \left(D_A
  \Phi^{\mathcal F}_{x, y}(V_0) \right)^T \right)
\]
for every $x,y \in \prn$ such that $x \neq y$.
We also use the notation $\tilde\Phi^{\mathcal F}(A,b) =
\tilde\Phi^{\mathcal F}_b(V_A) = P_{V_A}(b)$ for $b \in S^{2n-1}$. Note that
$\tilde\Phi^{\mathcal F}(A,(x-y)/\Vert x-y\Vert ) = \Phi^{\mathcal F} (A,x,y)$. The functions
$\Phi^{\mathcal F}$ and $\tilde\Phi^{\mathcal F}$ are smooth, so defining $\MhB = \Mh
\cap B(0,1)$, we may choose a Lipschitz constant $L_1\geq 1$ for
$\tilde\Phi^{\mathcal F}$ and a Lipschitz constant $L_2\geq 1$ for
$$
(A,b) \mapsto \det \left( D_A \tilde\Phi^{\mathcal F}_{b}(V_A) \left(D_A
  \tilde\Phi^{\mathcal F}_{b}(V_A) \right)^T \right) \text{ on } \MhB \times S^{2n-1}.
$$

Let
\[
0 < C_T \leq 2^{-\frac{m+2}{2}}
\quad \text{ and }
\quad \epsilon = \min \left\{\frac{C_T}{L_1}, \frac{(1-4C_T^2)^m}{4L_2} \right\}.
\]
If $(A,b) \in \left(\Mh \cap B(0,\epsilon)\right) \times S^{2n-1}$ is
such that
  \[
   \Vert\tilde\Phi^{\mathcal F}_b(V_A)\Vert\leq C_T,
  \]
we have by \eqref{simple_proj}
  \[
   \Vert\Phi_b(0)\Vert = \Vert\tilde\Phi^{\mathcal F}_b(V_0)\Vert\leq L_1\epsilon + C_T
   \leq 2C_T
  \]
and by \eqref{trans_pi_0} and \eqref{simple_proj_der}
\[
\begin{aligned}
 \det \left( D_A \tilde\Phi^{\mathcal F}_{b}(V_0) \left(D_A
  \tilde\Phi^{\mathcal F}_{b}(V_0) \right)^T \right) &= \det \left( D_A \Phi_{b}(0)
  \left(D_A \Phi_{b}(0)
    \right)^T \right) \\
  &\geq \frac{1}{2}(1-\Vert\Phi_{b}(0)\Vert^2)^m \\
  &\geq \frac{1}{2}(1-4C_T^2)^m.
\end{aligned}
\]
This implies that
\[
 \det \left( D_A \tilde\Phi^{\mathcal F}_{b}(V_A) \left(D_A
  \tilde\Phi^{\mathcal F}_{b}(V_A) \right)^T \right) \geq \frac{1}{2}(1-4C_T^2)^m
  - L_2\epsilon \geq \frac{1}{4}(1-4C_T^2)^m \geq C_T^2
\]
by the choice of $C_T$ and $\epsilon$.
We have shown that assuming inequality \eqref{trans_pi_0}, every plane $V \in \Gh$
has a coordinate neighbourhood on which the family $\mathcal F_h^{2n}(n,m)$
satisfies the transversality condition.
Next we prove the inequality \eqref{trans_pi_0}.

Recalling \eqref{isotropic_mat}, we see that for $A\in\Mh$ the $j$th component
function of the projection $\pi_A$ has the form
\[
 \begin{aligned}
  \pi_A^j(x) &= x_j + \sum_{k=1}^{2n-m} a_{kj}x_{m+k}\\
  &\begin{aligned}
    = x_j &+ \sum_{k=1}^{n-m+j} a_{kj}x_{m+k} + \sum_{k=n+1}^{2n-m} a_{kj}x_{m+k}\\
    &+ \sum_{k=j+1}^{m} \left(a_{(n-m+j)k} + \sum_{l=1}^{n-m} \left( a_{lj}a_{(l+n)k}
   - a_{lk}a_{(l+n)j} \right) \right) x_{n+k}.
   \end{aligned}
 \end{aligned}
\]
The partial derivatives with respect to the entries in the matrix $A$ are
\[
 \partial_{\alpha\beta}\pi_A^j(x) = \left\{
\begin{tabular}{l l}
  $x_{m+\alpha}+\sum_{k=j+1}^m a_{(\alpha+n)k}x_{n+k},$ &$\text{for } \beta=j,
   1\leq\alpha\leq n-m$\\
  $x_{m+\alpha}, $&$\text{for } \beta=j, n-m+1\leq\alpha\leq n-m+j$\\
  $x_{m+\alpha}-\sum_{k=j+1}^m a_{(\alpha-n)k}x_{n+k},$ &$\text{for } \beta=j,
   n+1\leq\alpha\leq 2n-m$\\
  $-a_{(\alpha+n)j}x_{n+\beta},$ &$\text{for } \beta>j, 1\leq\alpha\leq n-m$\\
  $x_{n+\beta}, $ &$\text{for } \beta>j, \alpha = n-m+j$\\
  $a_{(\alpha-n)j}x_{n+\beta},$ &$\text{for } \beta>j, n+1\leq\alpha\leq 2n-m$\\
  $0,$ &$\text{elsewhere}.$\\
 \end{tabular}\right.
\]
From this we may compute the matrix $B_{A,x} = D_A\pi_A(x)\left(D_A\pi_A(x)\right)^T$
at $A=0$: The matrix $D_A \pi_A(x)$ is an $m \times (2nm-\frac{m(3m-1)}{2})$-matrix.
The rows correspond to the component functions of the mapping $\pi_A$ and the columns
correspond to all possible pairs $(\alpha,\beta)$. When $A=0$, the entries on the
$j$th row of the matrix $D_A \pi_A(x)$ are
\begin{center}
\begin{tabular}{l l}
$x_{m+\alpha},$ & $\text{for } \beta=j, \alpha \in \{1,...,n-m+j,n+1,...,2n-m\}$\\
$x_{n+\beta},$  & $\text{for } \alpha=n-m+j, \beta \in \{j+1,...,m\}$\\
$0,$            & $\text{elsewhere}.$
\end{tabular}
\end{center}
From this we see that the coordinates $x_{m+1}, \ldots, x_{2n}$ appear on each row
exactly once and the other entries are zero. Thus all the diagonal entries of the
matrix $B_{0,x}$ are $[B_{0,x}]_{ii} = \Sigma_{k=1}^{2n-m} x_{m+k}^2$ for every
$i=1, \ldots, m$.

The entries $x_{m+1}, \ldots, x_{2n}$ appear on different positions on different
rows. If $i<j$, there exists a non-zero entry
at the same column
on both rows $i$ and $j$
if and only if
$\alpha = n-m+i$ and $\beta=j$. The entry on  $i$th row is $x_{n+\beta} = x_{n+j}$
and the entry on $j$th row is $x_{m+\alpha} = x_{n+i}$. This implies that
the non-diagonal entries of the matrix $B_{0,x}$ are $[B_{0,x}]_{ij} =
x_{n+i}x_{n+j}$ for every $i,j = 1, \ldots, m$, $i \neq j$. Thus
\[
\left[ B_{0,x} \right]_{ij} = \left\{
\begin{tabular}{l l}
 $\sum_{k=1}^{2n-m} x_{m+k}^2 =: \Delta_x,$ & $\text{for } i=j$\\
 $x_{n+i}x_{n+j}$ & $\text{for } i \neq j$.\\
\end{tabular}\right.
\]
Using an inductive argument one can see that the determinant of such matrix is
\[
 \det B_{0,x} = \Delta_x^m + \sum_{i = 2}^m (-1)^{i-1}(i-1)\Delta_x^{m-i}
 \sum_{\alpha \in \Lambda(m,i)} x_{\alpha(1)}^2 \cdots x_{\alpha(i)}^2,
\]
where
\[
\Lambda(m,i) = \{\alpha = (\alpha(1), \ldots, \alpha(i)) \pv \alpha(k) \in
\{n+1, \ldots, n+m\} \forall k, \alpha(1) < \ldots < \alpha(i)\}
\]
is the set of all strictly increasing
sequences of length $i$ consisting of integers from the interval $[n+1, n+m]$.

Let $b \in S^{2n-1}$. Note that the entry $\Delta_b$ in the diagonal
of the matrix $B_{0,b}$ is precisely $\Vert P_{V_0^\perp}(b) \Vert^2
= 1-\Vert\Phi_b(0)\Vert^2$. We will show that
\begin{equation}\label{det>c}
 \det B_{0,b} \geq \Delta_b^m - \Delta_b^{m-2} \sum_{\alpha \in
 \Lambda(m,2)} b_{\alpha(1)}^2 b_{\alpha(2)}^2 \geq \frac{1}{2}\Delta_b^m.
\end{equation}

The second inequality is easy:
\[
\begin{aligned}
 \frac{1}{2}\Delta_b^m - \Delta_b^{m-2} \sum_{\alpha \in \Lambda(m,2)}
  b_{\alpha(1)}^2 b_{\alpha(2)}^2
&\geq
 \frac{1}{2}\Delta_b^{m-2} \left( \left( \sum_{i=1}^m b_{n+i}^2 \right)^2 -
 \sum_{\alpha \in \Lambda(m,2)} 2 b_{\alpha(1)}^2 b_{\alpha(2)}^2 \right)\\
&=
 \frac{1}{2}\Delta_b^{m-2} \sum_{i=1}^m b_{n+i}^4\\
&\geq 0.
\end{aligned}
\]

For the first inequality it is enough to show that for any $i \in \{3, \ldots, m-1\}$
it holds that
\[
 (i-1)\Delta_b^{m-i} \sum_{\alpha \in \Lambda(m,i)} b_{\alpha(1)}^2 \cdots
 b_{\alpha(i)}^2 \geq i\Delta_b^{m-i-1} \sum_{\alpha \in \Lambda(m,i+1)}
 b_{\alpha(1)}^2 \cdots b_{\alpha(i+1)}^2.
\]
Using the fact that $\Delta_b \geq \sum_{j=n+1}^{n+m} b_j^2$, we see that the above
inequality holds if
\[
 (i-1)\sum_{\alpha \in \Lambda(m,i)} b_{\alpha(1)}^2 \cdots b_{\alpha(i)}^2 \left(
\sum_{j=n+1}^{n+m} b_j^2 \right)
 \geq i \sum_{\alpha \in \Lambda(m,i+1)} b_{\alpha(1)}^2 \cdots
 b_{\alpha(i+1)}^2.
\]
Now
\begin{align*}
  &\hspace{1.4em}(i-1)\sum_{\alpha \in \Lambda(m,i)} b_{\alpha(1)}^2 \cdots
 b_{\alpha(i)}^2 \left( \sum_{j=n+1}^{n+m} b_j^2 \right)
 -  i \sum_{\alpha \in \Lambda(m,i+1)} b_{\alpha(1)}^2 \cdots
 b_{\alpha(i+1)}^2 \\
 &= (i-1)\sum_{\alpha \in \Lambda(m,i)} b_{\alpha(1)}^2 \cdots b_{\alpha(i)}^2 \left(
\sum_{j=n+1}^{\alpha(i)} b_j^2 \right)
 - \sum_{\alpha \in \Lambda(m-1,i)} b_{\alpha(1)}^2 \cdots
 b_{\alpha(i)}^2 \left( \sum_{k=\alpha(i)+1}^{n+m} b_k^2 \right)\\
 &\begin{aligned} = (i-1) &\sum_{k=n+i}^{n+m} \; \sum_{j=n+1}^{k} \; \sum_{\alpha \in
 \Lambda(k-1,i-1)}
 b_{\alpha(1)}^2 \cdots b_{\alpha(i-1)}^2 b_j^2 b_k^2\\
 - &\sum_{j=n+i}^{n+m-1} \; \sum_{k=j+1}^{n+m} \; \sum_{\alpha \in \Lambda(j-1,i-1)}
 b_{\alpha(1)}^2 \cdots b_{\alpha(i-1)}^2 b_k^2 b_j^2
 \end{aligned}\\
 &\begin{aligned} \geq \sum_{k=n+i+1}^{n+m} b_k^2 &\left(\sum_{j=n+1}^{k} \;
 \sum_{\alpha \in \Lambda(k-1,i-1)} b_{\alpha(1)}^2 \cdots b_{\alpha(i-1)}^2 b_j^2
 \right.\\
 &\left.- \sum_{j=n+i}^{k-1} \; \sum_{\alpha \in \Lambda(j-1,i-1)}
 b_{\alpha(1)}^2 \cdots b_{\alpha(i-1)}^2 b_j^2\right)
 \end{aligned}\\
 &\geq 0,
\end{align*}
which proves the first inequality in \eqref{det>c}. This shows that the inequality
\eqref{trans_pi_0} is valid and finishes the proof that
the family $\mathcal F_h^{2n}(n,m)$ is transversal.

\section{Applications}\label{appl_sect}

Transversality together with Theorem \ref{besfed} imply
that the Besicovitch-Federer projection theorem holds for isotropic projections.

\begin{theorem}\label{besfed_isotropic}
Let $E\subset\mathbb R^{2n}$ be $\hm^m$-measurable with $\hm^m(E)<\infty$.
Then $E$ is purely $m$-unrectifiable, if and only if
$\hm^m(P_V(E))=0$ for $\mu_{n,m}$-almost all $V\in\Gh$.
\end{theorem}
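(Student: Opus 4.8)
The plan is to deduce this statement directly from the transversality of the projection family $\mathcal F_h^{2n}(n,m)$ (Theorem \ref{main}) together with the Besicovitch-Federer theorem for transversal families (Theorem \ref{besfed}). The only genuine work lies in passing from the local, coordinate-based formulation of Theorem \ref{besfed} — which speaks of $\lm^l$-almost every parameter in an open subset of $\pr^l$ — to the global, intrinsic formulation in terms of the measure $\mu_{n,m}$ on the manifold $\Gh$.

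First I would localize. Since $\Gh$ is a compact smooth manifold of dimension $l = 2nm - \frac{m(3m-1)}{2}$, it can be covered by finitely many coordinate neighbourhoods $U_1, \ldots, U_N$ with charts $\phi_i : U_i \to \Lambda_i \subset \pr^l$ of the type constructed in Section \ref{preli}. On each such chart the family $\{P_V : \prn \to V\}_{V \in U_i}$ is transversal: this is exactly what was established in the proof of Theorem \ref{main}, and by the transitive action of $U(n)$ on $\Gh$ together with the unitary invariance of $\omega$ it holds around every $V \in \Gh$, not only around $\langle e_1, \ldots, e_m\rangle$. Applying Theorem \ref{besfed} in each chart, with $\Lambda = \Lambda_i$ and $\pi_\lambda = P_{\phi_i^{-1}(\lambda)}$, I obtain that $E$ is purely $m$-unrectifiable if and only if $\hm^m(P_V(E)) = 0$ for $\lm^l$-almost every $\lambda \in \Lambda_i$, equivalently for all $V \in U_i$ outside a set whose coordinate image is $\lm^l$-null.

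The crucial bridge is to replace $\lm^l$ in coordinates by $\mu_{n,m}$. Here I would use that $\mu_{n,m}$ is constructed from the Haar measure of $U(n)$ via the transitive unitary action on $\Gh$ (as in \cite[Section 2]{BFMT}), so that it is a smooth measure of full support. Consequently, in each chart $\phi_i$ the pushforward of $\mu_{n,m}$ restricted to $U_i$ is absolutely continuous with respect to $\lm^l$ with a strictly positive smooth density, and conversely. Thus within each chart the notions ``$\lm^l$-almost every'' and ``$\mu_{n,m}$-almost every'' determine the same null ideal; in particular a subset of $U_i$ is $\mu_{n,m}$-null precisely when its coordinate image is $\lm^l$-null. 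I expect this absolute-continuity statement to be the main, and essentially the only, obstacle, since everything else is a direct quotation of the two theorems.

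Finally I would patch the charts together. For the forward implication, if $E$ is purely $m$-unrectifiable then on each $U_i$ the exceptional set $\{V \in U_i : \hm^m(P_V(E)) > 0\}$ is $\mu_{n,m}$-null; as there are only finitely many charts, the union of these exceptional sets is again $\mu_{n,m}$-null, giving $\hm^m(P_V(E)) = 0$ for $\mu_{n,m}$-almost every $V \in \Gh$. For the converse, if $\hm^m(P_V(E)) = 0$ for $\mu_{n,m}$-almost every $V \in \Gh$, then choosing any single chart $U_i$ with $\mu_{n,m}(U_i) > 0$, the vanishing holds for $\mu_{n,m}$-almost every $V \in U_i$, hence for $\lm^l$-almost every $\lambda \in \Lambda_i$ by the absolute continuity above, and the ``if'' direction of Theorem \ref{besfed} on that chart yields that $E$ is purely $m$-unrectifiable.
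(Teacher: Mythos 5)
Your proposal is correct and follows essentially the same route as the paper: the paper derives Theorem \ref{besfed_isotropic} in one line as an immediate consequence of the transversality established in Theorem \ref{main} together with Theorem \ref{besfed}. The chart-by-chart localization and the mutual absolute continuity of $\mu_{n,m}$ with $\lm^l$ in coordinates (via the smooth transitive $U(n)$-action) that you spell out are exactly the routine details the paper leaves implicit, and your handling of them is sound.
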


The $\alpha$-energy $I_\alpha(\mu)$ of a measure $\mu$ on $\pr^n$
is defined by
\[
 I_\alpha(\mu)= \int \int |x-y|^{-\alpha} d\mu y \, d\mu x
\]
and the Sobolev dimension of a finite measure $\mu$ on $\pr^n$ is defined as
\[
 \dimS \mu = \sup \left\{ \alpha \in \pr \pv \int(1+|x|)^{\alpha-n}|\hat\mu(x)|^2
  dx < \infty \right\},
\]
where $\hat\mu$ is the Fourier transform of the measure $\mu$.

Transversality immediately yields also the following result concerning the dimension
of projected measures. See \cite[Theorem 7.3]{PS} for more information.

\begin{theorem}\label{PS_7.3}
Let $0 < m \leq n$ and suppose that $\mu$ is a finite positive measure on $\prn$ with
finite $\alpha$-energy for some $\alpha > 0$. If $\sigma \in (0, \alpha]$, then
\[
 \dim \{V \in \Gh \pv \dimS \mu_V < \sigma \} \leq 2nm - \frac{m(3m-1)}{2} + \sigma
 - \max\{\alpha, m\},
\]
where $\mu_V$ is the projection of
the measure $\mu$ onto the $m$-plane $V$, that is, $\mu_V(A) = \mu(P_V^{-1}(A))$
for all $A \subset V$.
\end{theorem}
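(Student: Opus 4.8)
The plan is to read the statement off the general transversality theory of Peres and Schlag, the only genuine input being the transversality of $\mathcal F_h^{2n}(n,m)$ proved in Theorem \ref{main}. Concretely, \cite[Theorem 7.3]{PS} asserts that if $\{\pi_\lambda\pv \pr^N \to \pr^m\}_{\lambda \in \Lambda}$ is a transversal family with $\Lambda \subset \pr^l$ open and $\mu$ is a finite positive measure on $\pr^N$ with $I_\alpha(\mu) < \infty$, then for every $\sigma \in (0, \alpha]$
\[
 \dim \{\lambda \in \Lambda \pv \dimS \mu_\lambda < \sigma\} \leq l + \sigma - \max\{\alpha, m\},
\]
where $\mu_\lambda$ is the push-forward of $\mu$ under $\pi_\lambda$. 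All that remains is to present our family in this form and to record that the correct value of the parameter dimension is $l = 2nm - \frac{m(3m-1)}{2}$.

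First I would work in the local charts $(U, \phi)$ on $\Gh$ introduced in Section \ref{preli}, each of which identifies a neighbourhood $U \subset \Gh$ with an open set $\phi(U) \subset \pr^l$, $l = 2nm - \frac{m(3m-1)}{2} = \dim \Gh$. Over such a chart, choosing the orthonormal frame $\{v^A_1, \ldots, v^A_m\}$ of $V_A$ obtained by Gram--Schmidt, the $\pr^m$-valued maps $x \mapsto ((v^A_1 | x), \ldots, (v^A_m | x))$ form exactly the transversal family whose four conditions of Definition \ref{transversality} were verified in the proof of Theorem \ref{main}; moreover these maps represent the orthogonal projection $P_{V_A}$ in the frame $\{v^A_i\}$ via an isometry $V_A \to \pr^m$, so that their push-forward of $\mu$ is an isometric copy of $\mu_{V_A}$ and carries the same Sobolev dimension. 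Since $\mu$ has finite $\alpha$-energy and $\sigma \leq \alpha$, \cite[Theorem 7.3]{PS} applies with $N = 2n$ and the above $l$, giving
\[
 \dim \{V \in U \pv \dimS \mu_V < \sigma\} \leq 2nm - \frac{m(3m-1)}{2} + \sigma - \max\{\alpha, m\}.
\]
Here I use that $\phi$ is a diffeomorphism, hence locally bi-Lipschitz, so it preserves Hausdorff dimension and the estimate obtained in the $\phi(U)$-coordinates transfers back to $U$.

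To finish, I would cover the compact manifold $\Gh$ by finitely many such charts $U_1, \ldots, U_k$ and note that $\{V \in \Gh \pv \dimS \mu_V < \sigma\}$ is the union of the $k$ local exceptional sets. Since Hausdorff dimension is stable under finite unions, the global dimension equals the maximum of the local bounds, which is the asserted estimate.

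The only non-clerical points are that the quantity $\dimS \mu_V$ in the statement is intrinsic, i.e. independent of the orthonormal frame chosen to represent $V$ as $\pr^m$ — which holds because two such frames differ by an orthogonal map and the Sobolev dimension is invariant under isometries — and that the frame $\{v^A_i\}$ depends smoothly, with bounded derivatives, on $A$ over the relatively compact chart, so that the family genuinely satisfies conditions (1), (2) and (4) of Definition \ref{transversality}. Both are immediate. I do not expect a serious obstacle here: all the substantive analytic content has already been absorbed into Theorem \ref{main}, which is precisely why the result follows, as claimed, immediately from transversality.
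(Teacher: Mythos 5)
Your proposal is correct and follows exactly the route the paper takes: the paper offers no proof of Theorem \ref{PS_7.3} beyond citing \cite[Theorem 7.3]{PS} and the transversality established in Theorem \ref{main}, with the localization to charts and finite cover of $\Gh$ left implicit in the remark preceding Definition \ref{transversality}. Your additional details --- the Gram--Schmidt frame identification $V_A \cong \pr^m$, the isometry-invariance of $\dimS$, the bi-Lipschitz charts, and the finite-union stability of Hausdorff dimension --- are precisely the routine points the paper suppresses, and they are all handled correctly.
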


It follows from the definition of the Sobolev dimension 
that if $0<\dimS \mu\leq n$, then $\dimS \mu = \sup\{\alpha \pv
I_\alpha(\mu)<\infty\}$. 
In particular, if a Borel set $E\subset\pr^n$ supports a probability measure $\mu$
with $\dimS(\mu)\leq n$, then $\dim E \geq \dimS\mu$. If $\dimS\mu>n$, then $\mu$
is absolutely continuous. These facts together with Theorem \ref{PS_7.3} imply the
following result on the dimension of exceptional sets.

\begin{theorem}\label{dim_of_excep}
 Let $n,m$ be integers such that $0<m\leq n$ and let $E \subset \pr^{2n}$ be a Borel
 set with $\dim E = s$.
\begin{itemize}
 \item [(1)] If $s \leq m$, $\dim\{V \in \Gh \pv \dim P_V(E) < s\} \leq
  2nm - \frac{m(3m+1)}{2} + s$.
 \item [(2)] If $s > m$, $\dim\{V \in \Gh \pv \hm^m(P_V(E)) = 0\} \leq
  2nm - \frac{3m(m-1)}{2} - s$.
\end{itemize}
\end{theorem}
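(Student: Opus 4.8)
The plan is to derive both estimates from Theorem~\ref{PS_7.3}, feeding it measures produced by Frostman's lemma and then reading off the conclusion about the Sobolev dimension of the projected measures $\mu_V$ as information about $\dim P_V(E)$ and $\hm^m(P_V(E))$. First, since $\dim E = s$, for every $\alpha < s$ there is a compactly supported probability measure $\mu = \mu_\alpha$ concentrated on $E$ with $I_\alpha(\mu) < \infty$ (choose a Frostman measure for some exponent in $(\alpha, s)$). This is exactly the hypothesis required by Theorem~\ref{PS_7.3}.

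The heart of the argument is a translation step. Identifying $V$ with $\pr^m$, the projected measure $\mu_V$ is a probability measure on $\pr^m$ whose support $\spt \mu_V \subseteq P_V(\spt \mu) \subseteq P_V(E)$ is compact. The facts recorded after Theorem~\ref{PS_7.3}, applied in ambient dimension $m$, then give two implications: (a) if $\dimS \mu_V \le m$, then $\dim P_V(E) \ge \dim \spt \mu_V \ge \dimS \mu_V$; and (b) if $\dimS \mu_V > m$, then $\mu_V$ is absolutely continuous, so $\mathcal L^m(\spt\mu_V) > 0$, whence $\hm^m(P_V(E)) > 0$ and $\dim P_V(E) = m$.

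For part (1), where $s \le m$, I would fix a large $k$ (so that $s - 1/k > 0$) and apply Theorem~\ref{PS_7.3} with $\alpha = \sigma = s - 1/k$, noting $\max\{\alpha, m\} = m$. If $\dim P_V(E) < s - 1/k$, then (b) rules out $\dimS \mu_V > m$ (it would force $\dim P_V(E) = m \ge s$), so $\dimS \mu_V \le m$, and then (a) gives $\dimS \mu_V \le \dim P_V(E) < \sigma$. Hence $A_k := \{V \in \Gh \pv \dim P_V(E) < s - 1/k\} \subseteq \{V \pv \dimS \mu_V < \sigma\}$, and Theorem~\ref{PS_7.3} yields
\[
\dim A_k \le 2nm - \frac{m(3m-1)}{2} + \left( s - \frac1k \right) - m.
\]
Since $\{V \pv \dim P_V(E) < s\} = \bigcup_k A_k$ and Hausdorff dimension is countably stable, letting $k \to \infty$ gives the bound $2nm - \frac{m(3m-1)}{2} + s - m = 2nm - \frac{m(3m+1)}{2} + s$. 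For part (2), where $s > m$, I would instead take $m < \sigma \le \alpha < s$, so that $\max\{\alpha, m\} = \alpha$. If $\hm^m(P_V(E)) = 0$, then (b) excludes $\dimS \mu_V > m$, so $\dimS \mu_V \le m < \sigma$; thus $\{V \pv \hm^m(P_V(E)) = 0\} \subseteq \{V \pv \dimS \mu_V < \sigma\}$ and Theorem~\ref{PS_7.3} gives
\[
\dim\{V \pv \hm^m(P_V(E)) = 0\} \le 2nm - \frac{m(3m-1)}{2} + \sigma - \alpha.
\]
As the left-hand side is independent of $\sigma$ and $\alpha$, I would take the infimum as $\sigma \to m^+$ and $\alpha \to s^-$, obtaining $2nm - \frac{m(3m-1)}{2} + m - s = 2nm - \frac{3m(m-1)}{2} - s$.

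I expect the main obstacle to be the translation step and the parameter bookkeeping rather than the invocation of Theorem~\ref{PS_7.3}, which is a direct substitution. Specifically, one must correctly exclude the absolutely continuous regime $\dimS \mu_V > m$ in each case --- this is where the hypotheses $s \le m$ and $s > m$ enter --- and make sure that passing to $\spt \mu_V \subseteq P_V(E)$ preserves enough mass and dimension for implications (a) and (b) to apply. The limiting procedures are then routine: countable stability of dimension in part (1), and independence of the fixed exceptional set from the auxiliary parameters $\sigma, \alpha$ in part (2).
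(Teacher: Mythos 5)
Your proposal is correct and follows essentially the same route as the paper: Frostman's lemma to produce measures of finite $\alpha$-energy on $E$, Theorem \ref{PS_7.3} to bound the dimension of $\{V \pv \dimS\mu_V < \sigma\}$, the stated facts relating Sobolev dimension to the dimension of supporting sets and to absolute continuity, and a limiting argument in the parameters. The paper only writes out case (2) (with $\sigma$ at the endpoint $m$) and dismisses case (1) as ``similar,'' so your explicit treatment of case (1) via the sets $A_k$ and countable stability of Hausdorff dimension is just a more detailed rendering of the same argument, not a different one.
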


\begin{proof}
Assume first that $s > m$. Then by Frostman's lemma
we can take $\alpha > m$ and a probability measure
$\mu$ supported on $E$
such that $I_\alpha(\mu)<\infty$. Now Theorem \ref{PS_7.3} implies that
\begin{align*}
 &\dim\{V\in\Gh\pv\mu_V\;\text{ is not absolutely continuous }\}\\
 &\leq \dim\{V\in\Gh\pv\dimS\mu_V\leq m\}
 \leq 2nm - \frac{3m(m-1)}{2} - \alpha.
\end{align*}
It follows that
\[
 \dim\{V\in\Gh\pv\hm^m(P_V(E))=0\} \leq 2nm - \frac{3m(m-1)}{2} - \alpha.
\]
Letting $\alpha \nearrow s$ implies the claim. The first claim is proven similarly.
\end{proof}

In \cite[Theorem 1.2]{BFMT} it is shown that almost all isotropic projections
onto $m$-planes preserve the Hausdorff dimension for sets whose dimension is
at most $m$. For sets with dimension greater than $m$ almost all projections
have positive $\hm^m$ measure. Their proof uses energy estimates and Frostman's
lemma. Theorem \ref{dim_of_excep} 
strengthens this theorem by providing dimension estimates for the sets of
exceptional parameters.

The behaviour of dimensions of sets and measures under subfamilies of orthogonal
projections has recently been studied
by E. J\"arvenp\"a\"a, M. J\"arvenp\"a\"a and T. Keleti in \cite{JJK} and
D. Oberlin in \cite{O}.
Their results, however, do not give anything new to our setting due to the fact that
the family we are studying is transversal.

Theorems \ref{besfed_isotropic} and \ref{dim_of_excep} yield corresponding results
for the horizontal projections in the Heisenberg group. We denote by
$\pi: \He^n\to\prn, \pi(z,t)=z$ the projection onto the first $2n$ coordinates.

\begin{corollary}\label{besfed_hor}
 Let $E\subset\He^n$ be a Borel set with $\hm_{\rm E}^m(\pi(E))<\infty$.
Then $\hm^m(P_\Hor(E))=0$ for $\mu_{n,m}$-almost all $V\in\Gh$, if and only if
$E \subset A \times \pr$, where $A\subset \prn$ is purely $m$-unrectifiable in the
Euclidean sense.
\end{corollary}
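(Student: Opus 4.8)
The plan is to reduce the whole statement to Theorem \ref{besfed_isotropic} applied to the single Euclidean set $B := \pi(E) \subset \prn$. The first observation is that the horizontal projection $P_\Hor(z,t) = (P_V(z),0)$ ignores the vertical coordinate entirely, so $P_\Hor(E)$ depends only on $\pi(E)$; concretely $P_\Hor(E) = \{(P_V(z),0) \pv z \in B\}$. Since the Heisenberg metric restricted to the horizontal subgroup $\Hor = V \times \{0\}$ coincides with the Euclidean metric, the natural identification $\Hor \cong V$ is an isometry, and hence $\hm^m(P_\Hor(E)) = \hm^m(P_V(B))$ for every $V \in \Gh$. Consequently the hypothesis ``$\hm^m(P_\Hor(E)) = 0$ for $\mu_{n,m}$-almost all $V$'' is equivalent to ``$\hm^m(P_V(B)) = 0$ for $\mu_{n,m}$-almost all $V$''. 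This is the only place where the Heisenberg structure enters, and it rests entirely on the metric identification already recorded in Section \ref{preli}.

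Next I would verify that Theorem \ref{besfed_isotropic} is applicable to $B$. As $E$ is Borel and $\pi$ is continuous, the image $B = \pi(E)$ is analytic and therefore $\hm^m$-measurable; moreover $\hm^m(B) = \hm_{\rm E}^m(\pi(E)) < \infty$ by hypothesis, the two measures coinciding because $B$ lives in the Euclidean space $\prn$. Theorem \ref{besfed_isotropic} then states that $B$ is purely $m$-unrectifiable if and only if $\hm^m(P_V(B)) = 0$ for $\mu_{n,m}$-almost all $V$. Combining this with the reduction of the previous paragraph, the Heisenberg vanishing condition is equivalent to pure $m$-unrectifiability of $\pi(E)$.

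It then remains to match ``$\pi(E)$ is purely $m$-unrectifiable'' with the geometric condition ``$E \subset A \times \pr$ with $A$ purely $m$-unrectifiable.'' For sufficiency, if $E \subset A \times \pr$ with $A$ purely $m$-unrectifiable, then $B = \pi(E) \subset A$; since any subset of a purely $m$-unrectifiable set is again purely $m$-unrectifiable, $B$ is purely $m$-unrectifiable, which yields the almost sure vanishing of the projections. For necessity, the vanishing forces $B$ to be purely $m$-unrectifiable, and one simply takes $A := \pi(E) = B$, noting that trivially $E \subset \pi(E) \times \pr = A \times \pr$. This produces the required set $A$ and closes the equivalence.

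I expect no genuine obstacle beyond careful bookkeeping once Theorem \ref{besfed_isotropic} is in hand. The two points deserving attention are the measurability of $\pi(E)$, for which the analyticity of continuous images of Borel sets suffices, and the elementary but essential fact that pure $m$-unrectifiability is inherited by subsets; the remaining content is the metric identification that lets the Heisenberg Hausdorff measure on $\Hor$ be replaced throughout by the Euclidean measure on $V$.
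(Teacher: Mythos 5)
Your proposal is correct and follows exactly the route the paper intends: the paper leaves Corollary \ref{besfed_hor} as an immediate consequence of Theorem \ref{besfed_isotropic} via the factorization $P_\Hor = P_V \circ \pi$ and the isometric identification of $\Hor = V \times \{0\}$ with $V$, which is precisely your reduction to the set $B = \pi(E)$. Your added care about the $\hm^m$-measurability of $\pi(E)$ (via analyticity) and the heredity of pure unrectifiability under subsets supplies the bookkeeping the paper omits.
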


\begin{corollary}\label{dim_hor_proj}
  Let $n,m$ be integers such that $0<m\leq n$ and let $E \subset \He^n$ be a Borel
 set with $\dimH E = s$.
\begin{itemize}
 \item [(1)] If $s \leq m+2$, $\dim\{V \in \Gh \pv \dim P_\Hor(E) < s-2\} \leq
  2nm - \frac{m(3m+1)}{2} + s - 2$.
 \item [(2)] If $s > m+2$, $\dim\{V \in \Gh \pv \hm^m(P_\Hor(E)) = 0\} \leq
  2nm - \frac{3m(m-1)}{2} - s + 2$.
\end{itemize}
\end{corollary}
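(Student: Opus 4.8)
The plan is to deduce both parts from Theorem~\ref{dim_of_excep} applied to the Euclidean set $F:=\pi(E)\subset\prn$. Writing $s=\dimH E$, the argument rests on two facts: that the horizontal projection $P_\Hor$ of $E$ is, metrically, just the Euclidean projection $P_V$ of $F$, and that $\dimE F\ge s-2$. Granting these, parts (1) and (2) become exactly Theorem~\ref{dim_of_excep}(1) and (2) for $F$ with the role of $s$ played by $s-2$; this is also what produces the shifted thresholds, since $s\le m+2$ versus $s>m+2$ corresponds to $s-2\le m$ versus $s-2>m$.

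First I would record the metric reduction. Since $P_\Hor(z,t)=(P_V(z),0)$, we have $P_\Hor(E)=(P_V(F),0)\subset\Hor=V\times\{0\}$, and the Heisenberg metric restricted to the horizontal subgroup $\Hor$ agrees with the Euclidean metric. Hence
\[
 \dim P_\Hor(E)=\dimE P_V(F),\qquad \hm^m(P_\Hor(E))=\hm^m(P_V(F)),
\]
so the exceptional sets in (1) and (2) coincide with the exceptional sets for the Euclidean projections $P_V(F)$, $V\in\Gh$.

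The crux is the comparison $\dimE\pi(E)\ge\dimH E-2$, equivalently $\dimH E\le\dimE\pi(E)+2$. I would prove it by a covering estimate, first for bounded $E$: a cover of $\pi(E)$ by $N_r\lesssim r^{-\dimE\pi(E)-\varepsilon}$ Euclidean balls of radius $r$ lifts to a cover of $E$ by Heisenberg balls, where the vertical column over each ground ball has bounded $t$-extent while a Heisenberg $r$-ball reaches $t$-extent $\approx r^2$, so $\approx r^{-2}$ balls per column suffice; this gives $\hm_{\rm H}^{\dimE\pi(E)+2+\varepsilon}(E)<\infty$, and letting $\varepsilon\to0$ yields the claim. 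The general case follows by writing $E$ as a countable union of bounded pieces. (This comparison is implicit in the Heisenberg projection literature \cite{BFMT,BCFMT}.) This is the main obstacle, being the only genuinely Heisenberg-specific input; the extreme case $E\subset\{0\}\times\pr$, where $\dimH E=2$ but $\dimE\pi(E)=0$, shows it is sharp.

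Finally I would combine these with Theorem~\ref{dim_of_excep}. The delicate point is that $\dimE\pi(E)$ may be strictly larger than $s-2$, so one cannot feed $F$ to the theorem directly at level $s-2$; instead I pass to compact subsets of prescribed dimension. As $F$ is analytic, for each $d<\dimE F$ it contains a compact $K$ with $0<\hm_{\rm E}^d(K)<\infty$, so $\dimE K=d$. For part (2) ($s-2>m$) choose $\dimE K$ just below $\dimE\pi(E)$ (hence $>m$); then $\{V:\hm^m(P_V(F))=0\}\subset\{V:\hm^m(P_V(K))=0\}$, and Theorem~\ref{dim_of_excep}(2) bounds the latter by $2nm-\frac{3m(m-1)}{2}-\dimE K$, whose infimum over such $K$ is at most $2nm-\frac{3m(m-1)}{2}-(s-2)$. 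For part (1) ($0<s-2\le m$) write $\{V:\dimE P_V(F)<s-2\}$ as the countable union over $k$ of $\{V:\dimE P_V(F)\le s-2-\tfrac1k\}$; taking $K_k\subset F$ with $\dimE K_k=s-2-\tfrac1{2k}$ forces $\dimE P_V(K_k)<\dimE K_k$ on each such set, so Theorem~\ref{dim_of_excep}(1) bounds each piece by $2nm-\frac{m(3m+1)}{2}+\dimE K_k\le 2nm-\frac{m(3m+1)}{2}+(s-2)$, and the union has the same bound. (When $s-2\le0$ the exceptional set in (1) is empty.) Together with the metric reduction this yields both estimates.
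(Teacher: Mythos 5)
Your proof is correct and follows essentially the same route as the paper: reduce to Euclidean projections via $P_\Hor = P_V\circ\pi$ together with the comparison $\dimE\pi(E)\ge\dimH E-2$ (which the paper simply cites from \cite{BFMT} and you reprove by a covering argument), then conclude from Theorem~\ref{dim_of_excep}. The only execution-level difference is that the paper reruns the Frostman/energy proof of that theorem at level $s-2$, whereas you invoke its statement as a black box on compact subsets of $\pi(E)$ of prescribed dimension --- a legitimate workaround for the fact that $\dimE\pi(E)$ may strictly exceed $s-2$ (and that $\pi(E)$ is only analytic).
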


\begin{proof}
Since $P_\Hor = P_V \circ \pi$, we have by
\cite[Proof of Theorem 1.1]{BFMT} that $\dimE \pi(E) \geq \dimH E -2 = s - 2$.
The rest of the proof is the same as in Theorem \ref{dim_of_excep}.
\end{proof}

\end{document}